\newtheorem{theorem}{Theorem}[section]
\newtheorem{lemma}{Lemma}[section]
\newtheorem{claim}{Claim}[section]
\newcommand{\ex}{\mathrm{ex}}
\begin{document}


\title{ An improved upper bound for planar Tur\'an number of double star $S_{2,5}$}
\author{{\bf Xin Xu}, {\bf Yue Hu}, {\bf Xu Zhang}\\
\small  School of Sciences, North China University of Technology,\\
\small  Beijing $100144$, China \\}

\date{}
\maketitle
{\flushleft\large\bf Abstract}
The planar Tur\'{a}n number of a graph $H$, denoted by $\ex_{\mathcal{P}}(n,H)$, is the maximum number of edges in an $n$-vertex $H$-free planar graph. Recently, D. Ghosh, et al. initiated the topic of double stars and prove that $\ex_{\mathcal{P}}(n,S_{2,5})\leq \frac{20}{7}n$. In this paper, we continue to study this  and give a sharp upper bound $\ex_{\mathcal{P}}(n,S_{2,5})\leq \frac{19}{7}n-\frac{18}{7}$ for all $n\geq 1$, with equality when $n=12$. This improves Ghosh's result.
\begin{flushleft}
\textbf{Keywords:} planar Tur\'{a}n number; double star; extremal planar graph
\end{flushleft}
\textbf{AMS Classification:} 05C35; 05C50

\section{Introduction}
One of the famous results in extremal graph theory is Tur\'an Theorem \cite{Turan}, which gives the maximum number of edges in an $n$-vertex graph without containing  $K_{r}$ as a subgraph. The Erd$\ddot{\rm o}s$-Stone Theorem \cite{Erdos-Stone} extends this to the case for all non-bipartite graphs $H$ and shows that
$ex(n,H)=(1-\frac{1}{\chi(H) -1})\binom{n}{2}+o(n^{2})$, where $\chi(H)$ denotes the chromatic number of $H$. This latter result has been called
the `fundamental theorem of extremal graph theory'.
Over the last decade, a large quantity of research work has been carried out in Tur\'an-type problems of graphs and hypergraphs. More results can be found in \cite{Furedi_surv} and \cite{Keevash_surv}.

In 2016, Dowden \cite{Dowden_c4} initiated the study of Tur\'an-type problems when host graphs are planar, i.e., how many edges in an $n$-vertex planar graph without containing a given smaller graph. The {\it planar Tur\'an number} of a graph H, denoted by $\ex_{\mathcal{P}}(n,H)$, is the maximum number of edges in an $H$-free planar graph on $n$ vertices.
Dowden \cite{Dowden_c4} obtained the tight bound $\ex_{\mathcal{P}}(n,C_{4})\leq \frac{15}{7}(n-2)$ for all $n\geq 4$ and $\ex_{\mathcal{P}}(n,C_{5})\leq \frac{12n-33}{5}$ for all $n\geq 11$. It is interesting to note that Wang and Lih \cite{Wang-Lih} in 2007 gave some upper bounds on the size of $H$-free graphs that are $2$-cell embedded in a surface of nonnegative of Euler characteristic. They also obtained similar bounds for  $\ex_{\mathcal{P}}(n,C_{4})$, $\ex_{\mathcal{P}}(n,C_{5})$,$ex_{p}(n,C_{6})$.
Then Lan, Shi, Song \cite{Lan_theta} determined the upper bound $ex_{p}(n,\Theta_{4})\leq \frac{12}{7}(n-2)$ for all $n\geq 4$, $\ex_{\mathcal{P}}(n,\Theta_{5})\leq \frac{5}{2}(n-2)$ for all $n\geq 5$, and $\ex_{\mathcal{P}}(n,\Theta_{6})\leq \frac{18}{7}(n-2)$ for all $n\geq 7$, where $\Theta_{k}$ is obtained from a cycle $C_{k}$ by adding an additional edge joining any two non-consecutive vertices. They also demonstrated that the bounds for $\Theta_{4}$ and $\Theta_{5}$ are tight for infinitely many $n$.
Ghosh, Gy{\H{o}}ri, Martin, Paulos and Xiao \cite{Ghosh_c6} improved the bound for $ex_{p}(n,C_{6})$ showing that $\ex_{\mathcal{P}}(n,C_{6})\leq \frac{5}{2}n-7$ for all $n\geq 18$. Ghosh, Gy{\H{o}}ri, Paulos, Xiao and Zamora \cite{Ghosh_theta} also proved that $\ex_{\mathcal{P}}(n,\Theta_{6})\leq \frac{1}{7}(18n-48)$ for all $n\geq 14$. Planar Tur\'an number of other graphs, such as paths, matchings and wheels, are also considered. We refer the reader to the survey \cite{Lan_surv} for the results not mentioned here.

Recently, Gy{\H{o}}ri, Martin, Paulos and Xiao \cite{Ghosh_star} initiated the topic for double stars as the forbidden graph. An $(m,n)$-star, denoted by $S_{m,n}$, is the graph obtained from an edge $uv$ through joining the two end vertices with $m$ and $n$ vertices respectively. The edge $uv$ is called the {\it backbone} of the double star. They presented several results about $\ex_{\mathcal{P}}(n,S_{2,2})$, $\ex_{\mathcal{P}}(n,S_{2,3})$, $\ex_{\mathcal{P}}(n,S_{2,4})$, $\ex_{\mathcal{P}}(n,S_{2,5})$ and proved that $\frac{5}{2}n \leq \ex_{\mathcal{P}}(n,S_{2,5})\leq \frac{20}{7}n$. We improved the upper bound, which is tight for $n=12$. The following theorem is our main result.

\begin{theorem}\label{th1}
For any $n\geq 1$, $\ex_{\mathcal{P}}(n,S_{2,5})\leq \frac{19}{7}n-\frac{18}{7}$, with equality when $n=12$.
\end{theorem}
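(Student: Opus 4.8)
The plan is to prove the bound by induction on $n$, with the real work done by a discharging argument on a minimal counterexample. Write $e=e(G)$ for the graph $G$ under consideration. For $n\le 12$ the statement follows at once from the trivial inequality $e\le 3n-6$, since $3n-6\le \frac{19}{7}n-\frac{18}{7}$ precisely when $n\le 12$; the icosahedron, which is $5$-regular (hence $S_{2,5}$-free because it has no vertex of degree $\ge 6$) and has $30=\frac{19\cdot 12-18}{7}$ edges, shows equality at $n=12$. So assume $n\ge 13$ and that the bound holds for all smaller orders, and let $G$ be edge-maximal $S_{2,5}$-free and planar on $n$ vertices. If $G$ is disconnected we apply induction to its components; if $G$ has a vertex $v$ with $\deg(v)\le 2$ we apply induction to $G-v$ and use $2<\frac{19}{7}$; if $G$ has two adjacent degree-$3$ vertices $u,v$ we apply induction to $G-\{u,v\}$, noting that only $5<\frac{38}{7}$ edges are deleted. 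Hence we may assume $G$ is connected, $\delta(G)\ge 3$, the degree-$3$ vertices form an independent set, and we fix a plane embedding of $G$.

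Next I would record the local consequences of $S_{2,5}$-freeness. The basic fact is that an edge $uv$ is \emph{not} the backbone of a copy of $S_{2,5}$ with $u$ the light centre exactly when $\deg(u)\le 2$, or $\deg(v)\le 5$, or $u$ and $v$ have at least $\deg(u)+\deg(v)-8$ common neighbours; this is a one-line defect version of Hall's theorem applied to $(N(u)\cup N(v))\setminus\{u,v\}$. Together with $\delta(G)\ge 3$ this is very restrictive: a vertex of degree $\ge 8$ would force all its neighbours to have degree $\le 2$, contradicting $\delta(G)\ge 3$; and a vertex $v$ of degree $7$ would satisfy $N[w]\subseteq N[v]$ for every neighbour $w$, so $N[v]$ would be a union of components and $n=8$. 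Since $n\ge 13$, neither occurs, so $\Delta(G)\le 6$. Moreover, for a vertex $v$ of degree $6$, every neighbour $u$ has at most one neighbour outside $N[v]$; and if all six faces at $v$ are triangles, then the link of $v$ is a $6$-cycle $u_1u_2\cdots u_6$ whose chords (edges $u_iu_j$ with $j\ne i\pm1$) must all be drawn in the one remaining disk bounded by that cycle, hence form a non-crossing family of at most three chords. Because a hexagon has no non-crossing perfect matching by diagonals, $v$ then cannot have all six neighbours of degree $\ge 5$; it must have a neighbour of degree $\le 4$.

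The final and main step is a discharging argument. Assign each vertex $v$ the charge $10-2\deg(v)$ and each face $f$ the charge $3|f|-9$. Since $|f|\ge 3$ for every face and $n-e+f=2$, every face starts nonnegative, the only negative charges are the $-2$ at degree-$6$ vertices, and the total charge equals $-7e+19n-18$. Thus it suffices to redistribute charge so that every face and every vertex ends nonnegative: then $-7e+19n-18\ge 0$, contradicting $e>\frac{19}{7}n-\frac{18}{7}$. The rule sends charge to each degree-$6$ vertex from its incident non-triangular faces (each of charge $\ge 3$) and from its neighbours of degree $3$ and $4$ (of charge $+4$ and $+2$). Using the structural facts above, one argues that each degree-$6$ vertex collects at least $2$: if it is on a non-triangular face it draws $2$ from there, and otherwise the hexagon/chord argument supplies enough low-degree neighbours, where the independence of the degree-$3$ vertices, the common-neighbour inequality, and planarity are used to control how a single low-degree vertex or large face is shared among several degree-$6$ vertices.

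The main obstacle is exactly this last bookkeeping: one must check, over all cyclic arrangements of faces and of neighbour-degrees $3,4,5,6$ around a degree-$6$ vertex (and, in the tight cases, one layer further out), that the charge it gathers is always at least $2$ while no face or low-degree vertex is over-drawn. This is where the inequality $|N(u)\cap N(v)|\ge\deg(u)+\deg(v)-8$ must be used at full strength alongside the planar rotation system, and it is also what produces the additive constant $-\frac{18}{7}$ (via the face charges and Euler's formula) and pins down the equality case $n=12$. I expect the analysis to split into a fair number of subcases; once the structural lemmas and the charge assignment above are in place, the remaining verifications should be essentially routine.
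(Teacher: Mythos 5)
Your skeleton up to the point where $\Delta(G)\le 6$ is established matches the paper's: the base case $n\le 12$ via $e\le 3n-6$, the icosahedron for equality at $n=12$, the reductions for disconnected graphs and vertices of degree $\le 2$, and the elimination of maximum degree $7$ and $\ge 8$ are essentially the paper's opening claims and lemmas. Your common-neighbour criterion $|N(u)\cap N(v)|\ge \deg(u)+\deg(v)-8$ is correct, and your observation that a degree-$6$ vertex must have a neighbour of degree $\le 4$ can indeed be made rigorous via outerplanarity of $G[N(v)]$ (the paper proves this only after excluding $6$-$6$ edges and $6$-$k$-$6$ paths, so your version is if anything slightly stronger).

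The gap is the final discharging step, which is where all the remaining work lives and which you have not carried out. Your rule leaves each degree-$6$ vertex with $-2$ to be recovered from non-triangular incident faces and from neighbours of degree $3$ or $4$; but a degree-$4$ vertex has only $+2$ to give, so a single $6$-$4$-$6$ path whose middle vertex is the unique low-degree neighbour of both endpoints (with all incident faces triangular) already defeats the rule as stated, and $6$-$6$ edges and $6$-$3$-$6$ paths cause similar over-drawing. These configurations are not routine residue: the paper devotes its entire Section 3 (Lemma \ref{lem}) to exactly the four configurations $6$-$6$ edge and $6$-$5$-$6$, $6$-$4$-$6$, $6$-$3$-$6$ paths, and it handles them not by local discharging but by deleting an explicit $7$-, $8$- or $11$-vertex subgraph, bounding the edges that subgraph carries (itself a delicate planarity case analysis, e.g.\ the bound $e_{add}\le 10$ in Case 1.2), and applying the induction hypothesis to the remainder --- a genuinely non-local argument. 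Only after these configurations are excluded does the count become easy, and then it is simply $n_6\le n_3+n_4$ plus a degree sum, with no discharging needed at all. So you must either prove that each problematic configuration supplies the missing charge (which amounts to redoing Lemma \ref{lem} in discharging language, with the same volume of case analysis) or switch to the deletion-plus-induction route; as written, ``the remaining verifications should be essentially routine'' is precisely the proof that is missing.
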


 We need to introduce more notations. The graphs considered here are simple and finite. Let $G=(V(G),E(G))$, where $V(G)$ and $E(G)$ are the vertex set and edge set respectively. Let $n=v(G)=|V(G)|$ and  $m=e(G)=|E(G)|$. We denote the degree of a vertex $v$ by $d(v)$, the minimum degree by $\delta(G)$ and maximum degree by $\Delta(G)$. Let $n_{k}$ be the number of vertices of degree $k$. Moreover, We use $N_{G}(v)$ to denote the set of vertices of $G$ adjacent to $v$. Let  $N_{G}[v]=N_{G}(v)\cup \{v\}$. For any subset $S\subset V(G)$, the subgraph induced on $S$ is denoted by $G[S]$. We denote by $G\backslash S$ the subgraph induced on $V(G)\backslash S$. If $S=\{v\}$, we simply write $G\backslash v$. We use $e[S,T]$ to denote the number of edges between $S$ and $T$, where $S$, $T$ are subsets of $V(G)$.
 A $k$-$l$ edge is an edge of which end vertices are of degree $m$ and $n$. A $k$-$l$-$s$-path is a path consisting of  three vertices of degree $k$, $l$ and $s$.
All terminology and notation not defined in this paper are the same as those in the book\cite{Bondy}.

The paper is organized as follows. We  prove the theorem in Section $2$. In Section $3$, some necessary lemmas that used in the proof of Theorem \ref{th1} are provided.

\section{Planar Tur\'an number of \texorpdfstring{$S_{2,5}$}{S2,5}}

Let $G$ be an $S_{2,5}$-free plane graph on $n$ vertices. We shall proceed the proof by induction on $n$.

The upper bound in Theorem \ref{th1} is tight for $n=12$. To see that, consider the $5$-regular maximal planar graph with $12$ vertices and $30$ edges, given in Figure \ref{fig_1}. This regular graph does not contain $S_{2,5}$, since there is a vertex of degree $6$ in $S_{2,5}$.

\begin{figure}[ht]
  \centering    \includegraphics[width=0.4\textwidth]{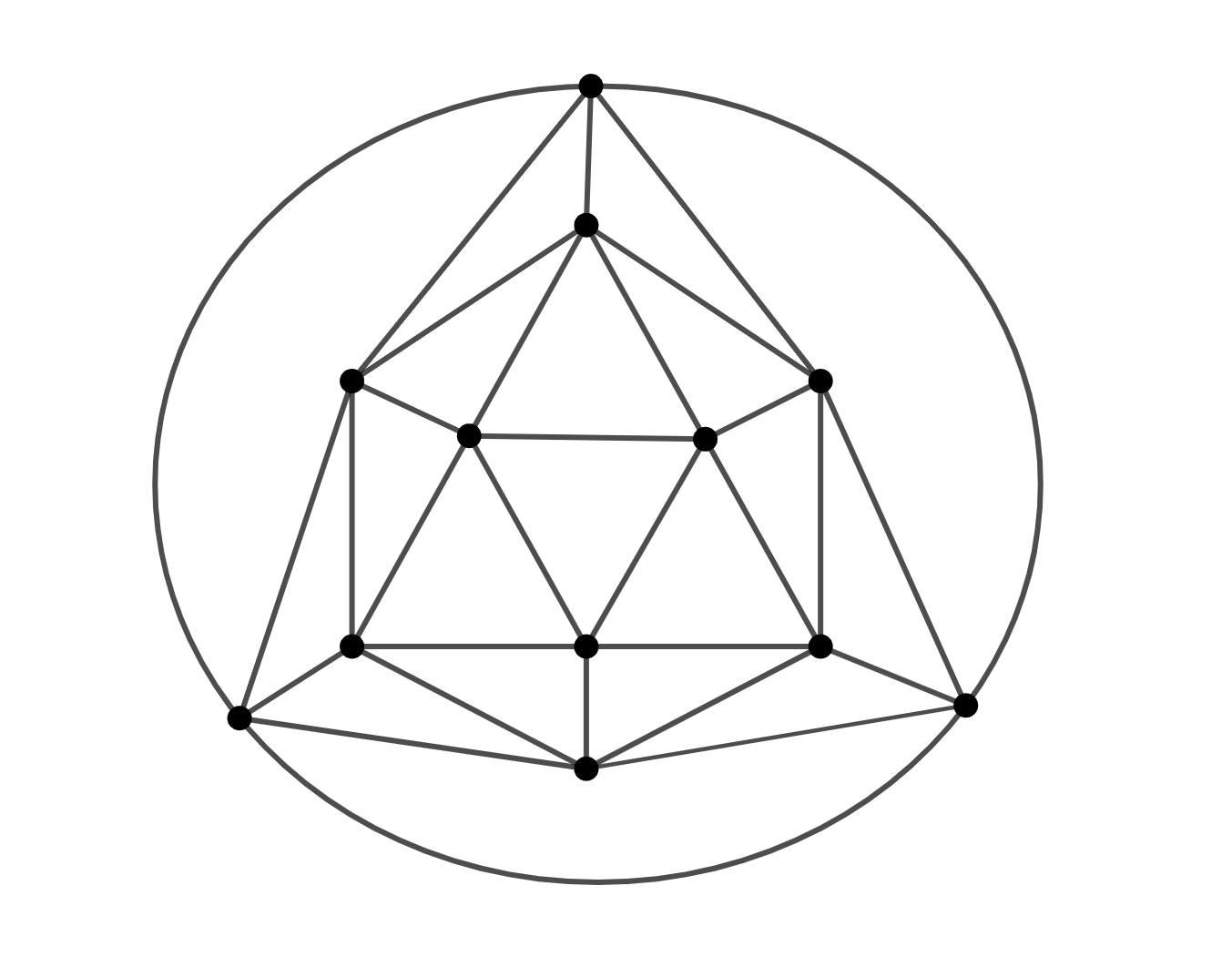}
  \caption{$5$-regular maximal planar graph on $12$ vertices.}
  \label{fig_1}
\end{figure}

\begin{claim}
If $G$ is a graph on  $n (1\leq n\leq 12)$ vertices. The number of edges in $G$ is at most $\frac{19}{7}n-\frac{18}{7}$.

\end{claim}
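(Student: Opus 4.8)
The plan is to reduce the whole statement to the elementary planar edge bound. Recall that $G$ has been fixed at the start of the section to be an $S_{2,5}$-free \emph{plane} graph, so in particular $G$ is a simple planar graph; hence Euler's formula yields $e(G)\le 3n-6$ whenever $n\ge 3$. The only real content of the claim is then an arithmetic coincidence: I would check that $3n-6\le \frac{19}{7}n-\frac{18}{7}$ is equivalent, after multiplying through by $7$, to $21n-42\le 19n-18$, i.e.\ to $2n\le 24$, i.e.\ to $n\le 12$. Consequently, for every $n$ with $3\le n\le 12$ one gets $e(G)\le 3n-6\le \frac{19}{7}n-\frac{18}{7}$, which is precisely the desired bound, with equality forced only at $n=12$ (consistent with the extremal $5$-regular maximal planar graph of Figure \ref{fig_1}).

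It then remains to dispose of the two degenerate cases $n=1$ and $n=2$ by hand. For $n=1$ a simple graph has no edges and $0\le \frac{19}{7}-\frac{18}{7}=\frac{1}{7}$; for $n=2$ a simple graph has at most one edge and $1\le \frac{38}{7}-\frac{18}{7}=\frac{20}{7}$. Together with the range $3\le n\le 12$ handled above, this covers all $n$ with $1\le n\le 12$.

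I do not anticipate any genuine obstacle: this claim is just the base case of the induction on $n$ in the proof of Theorem \ref{th1}, and the argument is a one-line comparison of linear functions plus two trivial checks. The only point that deserves a moment's care is the upper end of the range — the values $n=9,10,11,12$, where $3n-6$ is largest relative to the target $\frac{19}{7}n-\frac{18}{7}$ — but the uniform reformulation $2n\le 24$ settles all of them at once, so no case-by-case verification of the intermediate $n$ is actually needed. (One could instead invoke $S_{2,5}$-freeness to obtain an even stronger bound for these small $n$, but this would be superfluous for the claim as stated.)
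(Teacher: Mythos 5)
Your proposal is correct and is essentially the paper's own argument: both reduce the claim to the planar bound $e(G)\le 3n-6$ and the arithmetic equivalence $3n-6\le \frac{19}{7}n-\frac{18}{7}\iff n\le 12$. Your explicit handling of $n=1,2$ (where $3n-6$ is not a valid upper bound) is a minor tidying-up that the paper glosses over, but it does not change the approach.
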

\begin{proof}
It is easy to see that a maximum planar graph with $n$ vertices contains $3n-6$ edges. Note that $3n-6 \leq \frac{19}{7}n-\frac{18}{7}$ for $n\leq 12$. We have $e(G)\leq \frac{19}{7}n-\frac{18}{7}$ when $1\leq n\leq 12$.
\end{proof}

It is worth  noting that if $G$ is disconnected, the  number of edges in each component satisfies the upper bound. Thus, $e(G)\leq \frac{19}{7}n-\frac{18}{7}$ by the induction hypothesis. So we may assume $G$ is connected.

\begin{lemma}\label{degree}
If $G$ has a vertex $v$ with $d(v)\leq 2$, then $e(G)\leq \frac{19}{7}n-\frac{18}{7}$.
\end{lemma}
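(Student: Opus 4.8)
The plan is a single-step induction: peel off the vertex $v$ and apply the induction hypothesis to what remains. First I would set $G' = G \backslash v$. Since $G'$ is a subgraph of $G$, it is planar and $S_{2,5}$-free, and it has $n-1$ vertices. The base cases $1 \le n \le 12$ are already handled by the Claim, so I may assume $n \ge 13$, whence $n - 1 \ge 12 \ge 1$ and the induction hypothesis gives $e(G') \le \frac{19}{7}(n-1) - \frac{18}{7}$.

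Next I would add back the edges incident to $v$; there are exactly $d(v) \le 2$ of them. Therefore
\[
e(G) = e(G') + d(v) \le \frac{19}{7}(n-1) - \frac{18}{7} + 2 = \frac{19}{7}n - \frac{23}{7} \le \frac{19}{7}n - \frac{18}{7},
\]
where the last step uses $\tfrac{23}{7} > \tfrac{18}{7}$. That is the entire argument.

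There is no real obstacle here; the only things worth double-checking are that the induction hypothesis is legitimately available (it is, since $n-1$ lies in the already-established range once $n \ge 13$) and that deleting $v$ possibly disconnecting $G$ causes no trouble — it does not, because the inductive statement applies to every $S_{2,5}$-free planar graph on fewer than $n$ vertices regardless of connectivity (equivalently, one may bound each component separately, as remarked just before the lemma). The comfortable slack of $\tfrac{5}{7}$ in the final inequality is precisely why the substantive work of the theorem is concentrated entirely in the remaining case $\delta(G) \ge 3$, which the subsequent lemmas must handle.
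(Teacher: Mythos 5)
Your argument is exactly the paper's proof: delete $v$, apply the induction hypothesis to $G\backslash v$, and add back the at most $d(v)\leq 2$ edges, noting the slack $\frac{19}{7}-2=\frac{5}{7}$. The extra remarks about base cases and disconnection are correct and only make explicit what the paper leaves implicit.
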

\begin{proof}
If we delete the vertex $v$, then $e(G\backslash v)\leq \frac{19}{7}(n-1)-\frac{18}{7}$ by the induction hypothesis. Thus $e(G)= e(G\backslash v) + d(v)\leq \frac{19}{7}(n-1)-\frac{18}{7} +2 \leq \frac{19}{7}n-\frac{18}{7}$.
\end{proof}

\begin{lemma}\label{cutedge}
If $G$ has a cut edge, then $e(G)\leq \frac{19n}{7}-\frac{18}{7}$.
\end{lemma}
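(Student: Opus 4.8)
The plan is to use the same deletion-and-induction strategy as in Lemma \ref{degree}, but splitting along the cut edge rather than removing a low-degree vertex. Suppose $e = xy$ is a cut edge of $G$. Deleting $e$ disconnects $G$ into two components; let $G_1$ be the component containing $x$ and $G_2$ the component containing $y$, with $|V(G_1)| = n_1$ and $|V(G_2)| = n_2$, so that $n_1 + n_2 = n$ and $n_1, n_2 \geq 1$. Each $G_i$ is a planar graph on $n_i$ vertices (it is an induced subgraph of $G$, hence $S_{2,5}$-free, though we do not even need $S_{2,5}$-freeness here), so by the induction hypothesis $e(G_i) \leq \frac{19}{7} n_i - \frac{18}{7}$. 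Then
\[
e(G) = e(G_1) + e(G_2) + 1 \leq \frac{19}{7}(n_1 + n_2) - \frac{36}{7} + 1 = \frac{19}{7} n - \frac{29}{7} \leq \frac{19}{7} n - \frac{18}{7},
\]
since $\frac{29}{7} > \frac{18}{7}$. This gives the claimed bound.

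One subtlety to address is the base case of the induction: the induction hypothesis applies to $G_1$ and $G_2$ only because both have strictly fewer than $n$ vertices (as $n_1, n_2 \geq 1$ forces $n_i \leq n-1$), and the Claim together with Lemmas \ref{degree} and \ref{cutedge} is being established by induction on $n$, with the Claim handling all $n \leq 12$ directly. So for $n \leq 12$ there is nothing to prove, and for $n \geq 13$ the split produces two strictly smaller instances to which the hypothesis legitimately applies. I should also note that the inequality $\frac{19}{7} n_i - \frac{18}{7}$ is a valid upper bound for $G_i$ regardless of whether $G_i$ is connected, since the Claim and the preceding remark about disconnected graphs already cover that case.

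I do not expect any real obstacle here: the cut-edge case is genuinely the easy part, since the "$+1$" for the single bridge is cheaply absorbed by the slack between $\frac{29}{7}$ and $\frac{18}{7}$ created by paying the constant $-\frac{18}{7}$ twice (once per piece) while only needing it once. The only thing to be careful about is bookkeeping the constants correctly and making explicit that both pieces are smaller than $G$ so the induction is well-founded; no structural planarity or $S_{2,5}$-free argument is needed at this step. The hard work of the paper will come later, when $G$ is $2$-edge-connected with minimum degree at least $3$ and one must exploit the $S_{2,5}$-free condition via a discharging argument.
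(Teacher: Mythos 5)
Your proof is correct and takes essentially the same route as the paper: delete the cut edge, apply the induction hypothesis to the two resulting components, and absorb the $+1$ into the slack from paying $-\frac{18}{7}$ twice. The paper's version is just a terser rendering of the same computation.
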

\begin{proof}
Assume that $uv$ is a cut edge and $H_{1}, H_{2}$ are the two components. Let $n'_{1}=|V(H_{1})|, n'_{2}=|V(H_{2})|$. Then
$e(H_{1})\leq \frac{19}{7}n'_{1}-\frac{18}{7}, e(H_{2})\leq \frac{19}{7}n'_{2}-\frac{18}{7}$ by the induction hypothesis.
We have $e(G)\leq e(H_{1})+ e(H_{2})+ 1 \leq \frac{19}{7}n-\frac{18}{7}$.
\end{proof}
Therefore,  we can suppose $\delta(G)\geq 3$ and $G$ does not contain a  cut edge.

\begin{claim}
If $\Delta(G)\geq 7$, then $e(G)\leq \frac{19}{7}n-\frac{18}{7}$.
\end{claim}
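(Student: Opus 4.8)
The plan is to pick a vertex $v$ with $d(v)=\Delta(G)\ge 7$ and show that such a vertex is, under the standing assumptions $\delta(G)\ge 3$ and $G$ connected, almost impossible: either $G$ already contains $S_{2,5}$, or $G$ is so tightly packed around $v$ that $n=8$, which is covered by the first Claim of this section. The driving observation is that $S_{2,5}$ consists of a center of degree $6$ joined to a vertex of degree $3$, and since every vertex of $G$ has degree at least $3$, any neighbour $u$ of $v$ automatically supplies two ``leaves'' at $u$, while a high-degree $v$ supplies five leaves at $v$ provided $v$ has enough neighbours left after deleting those two.

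First I would dispose of the case $d(v)\ge 8$. Take any $u\in N_G(v)$; since $\delta(G)\ge 3$, $u$ has two neighbours $a,b\ne v$ with $a\ne b$. Then $|N_G(v)\setminus\{u,a,b\}|\ge d(v)-3\ge 5$, so we may choose $b_1,\dots,b_5$ in this set; the nine vertices $v,u,a,b,b_1,\dots,b_5$ are pairwise distinct (none of the leaves equals $v$ or $u$, and the $b_i$ were chosen distinct), and together with the edges $uv$, $ua$, $ub$, $vb_1,\dots,vb_5$ they form a copy of $S_{2,5}$ with backbone $uv$, contradicting $S_{2,5}$-freeness. Hence $d(v)=7$.

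For $d(v)=7$ the same count falls one vertex short, so instead I would first prove the structural statement $N_G(u)\subseteq N_G[v]$ for every $u\in N_G(v)$. Indeed, if some $u\in N_G(v)$ had a neighbour $x\notin N_G[v]$, then $\delta(G)\ge 3$ gives a further neighbour $y$ of $u$ with $y\notin\{v,x\}$; since $x\notin N_G(v)$ we get $|N_G(v)\setminus\{u,x,y\}|\ge 7-1-1=5$, and choosing $b_1,\dots,b_5$ there yields a copy of $S_{2,5}$ with backbone $uv$ (leaves $x,y$ at $u$ and $b_1,\dots,b_5$ at $v$), again a contradiction. Consequently every neighbour of $v$ has all of its neighbours inside $N_G[v]$, so the connected component of $v$ equals $N_G[v]$; as $G$ is connected this forces $n=|N_G[v]|=8$, and then $e(G)\le 3n-6\le\frac{19}{7}n-\frac{18}{7}$ by the first Claim of this section.

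I do not expect a real obstacle here: the arithmetic is trivial and the only points needing a little care are checking that the nine exhibited vertices are genuinely distinct, and noticing that the borderline case $d(v)=7$ cannot be killed by a one-shot embedding of $S_{2,5}$ but instead must be routed through the ``$N_G[v]$ is a whole component'' argument. The upshot, which will be used in the rest of the proof, is that from now on one may assume $\Delta(G)\le 6$.
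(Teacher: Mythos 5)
Your proposal is correct and follows essentially the same route as the paper: rule out $d(v)\ge 8$ by exhibiting an $S_{2,5}$ with backbone $uv$ using $\delta(G)\ge 3$, and for $d(v)=7$ show no neighbour of $v$ can reach outside $N_G[v]$, forcing $n=8$. Your write-up merely fills in the vertex-distinctness and counting details that the paper leaves implicit.
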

\begin{proof}
Recall that  each vertex $u \in N(v)$ has a degree greater that or equal to $3$. If there exists a vertex $v$  with $d(v)\geq 8$,   $G$ contains a copy of $S_{2,5}$, a contradiction.

Then let $v$ be a vertex with maximum degree $7$ and  $S=N_{G}[v]$. Note that if any $u\in N(v)$ is adjacent to a vertex in $G\backslash S$,  $G$ contains a copy of $S_{2,5}$ for $d(u)\geq 3$. So $G[S]$ is a connected component. Since $G$ is connected, we have $V(G)=S$. This implies $n=8$ and $e(G)\leq \frac{19}{7}n-\frac{18}{7}$.
\end{proof}

Next, we assume that $\Delta(G)\leq 6$.

\noindent{\bf Case 1.}\quad  $\Delta(G)\leq 5$.

Since there is a vertex of degree $6$ in $S_{2,5}$,  $G$ must be $S_{2,5}$-free. The maximum number of edges in $G$ is $\frac{5n}{2}\leq \frac{19}{7}n-\frac{18}{7}$ for $n\geq 12$.

\noindent{\bf Case 2.}\quad  $\Delta(G)=6$.

The following lemma is necessary for the proof, and we will prove it latter in Section $3$.

\begin{lemma}\label{lem}
Let $G$ be an  $n$-vertex  $S_{2,5}$-free connected graph without a cut edge. $\Delta(G)=6$ and $\delta(G)\geq 3$. If $G$ contains any copy of $6$-$6$ edge, $6$-$5$-$6$ path, $6$-$4$-$6$ path or $6$-$3$-$6$ path, then $e(G)\leq \frac{19}{7}n-\frac{18}{7}$.
\end{lemma}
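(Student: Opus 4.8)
My plan is to argue by contradiction combined with the induction on $n$ that is already running in Section 2: I assume $G$ is an $S_{2,5}$-free connected plane graph with no cut edge, $\delta(G)\ge 3$, $\Delta(G)=6$, and $e(G) > \frac{19}{7}n-\frac{18}{7}$, and that $G$ contains one of the listed configurations ($6$-$6$ edge, or a $6$-$k$-$6$ path with $k\in\{3,4,5\}$). The key structural observation is that if $v$ is a vertex of degree $6$, then the $S_{2,5}$-freeness severely restricts the neighbourhood: no neighbour $u$ of $v$ with $d(u)\ge 3$ can have a further neighbour outside $N_G[v]$ that is not also forced to lie in a very small set — in fact, roughly speaking, a degree-$6$ vertex $v$ forces almost all of its neighbours' other neighbours to be concentrated near $v$. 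So the presence of a $6$-$6$ edge $uv$, or of two degree-$6$ vertices linked by a short path through a low-degree vertex, makes the union of the two closed neighbourhoods into an (almost) closed region $S$ of bounded size. The idea is then to peel off this set $S$: set $G' = G\backslash S$, apply the induction hypothesis $e(G')\le \frac{19}{7}|V(G')|-\frac{18}{7}$ to $G'$ (or to each of its components, since the Claim handles small or disconnected pieces), and show that $e[S,V(G')] + e(G[S])$ is small enough relative to $\frac{19}{7}|S|$ that adding these edges back keeps us under the bound $\frac{19}{7}n-\frac{18}{7}$. Concretely, one shows that a vertex of degree $6$ in $G$ can contribute at most a bounded amount to the edges leaving $S$, because its neighbours cannot themselves have many neighbours outside, on pain of creating $S_{2,5}$.

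The case analysis on which configuration appears is where the real work sits. For the $6$-$6$ edge $uv$: any vertex in $N(u)\setminus N[v]$ of degree $\ge 3$ would, together with $v$ and five of its own other neighbours, exhibit $S_{2,5}$ (since $d(v)=6$ already gives $v$ five neighbours besides $u$); so $N(u)\setminus N[v]$ and $N(v)\setminus N[u]$ consist only of degree-$\le 2$ vertices — but $\delta(G)\ge 3$, so actually $N(u)\cup N(v)$ is forced, $|N[u]\cup N[v]|$ is at most $2+5+5=12$, and one argues $S=N[u]\cup N[v]$ is the whole graph or is cut off by at most a couple of edges; then a direct count using $e(G)\le 3n-6$ for the induced piece plus the discharging against $\frac{19}{7}$ finishes it. For the $6$-$k$-$6$ path $x$-$w$-$y$ with $d(x)=d(y)=6$: here the middle vertex $w$ of degree $k\le 5$ has all of its neighbours "visible" from both $x$ and $y$, and one shows the two closed neighbourhoods $N[x]$ and $N[y]$ again merge into a bounded region whose boundary to the rest of $G$ is small. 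In every subcase I expect the numeric inequality to reduce to checking that $\big(\text{edges inside } S\big) + \big(\text{edges from } S \text{ out}\big) \le \frac{19}{7}|S| - \frac{18}{7} + \big(\text{correction from } G'\big)$, which should hold with room to spare because $|S|$ is small (at most around $12$) and $G[S]$ is planar hence has at most $3|S|-6$ edges while $\frac{19}{7}\cdot 12 - \frac{18}{7} = 30$ exactly matches the $n=12$ extremal example.

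The main obstacle will be bookkeeping the edges between $S$ and $V(G)\setminus S$: a low-degree vertex on the boundary of $S$ can have neighbours outside, and I must be careful that $S_{2,5}$-freeness is invoked correctly to cap how many such outgoing edges each vertex of $S$ contributes — in particular distinguishing whether boundary vertices have degree $3$, $4$, $5$, or $6$ and whether they are adjacent to one or both of the two degree-$6$ vertices. A secondary subtlety is ensuring the no-cut-edge hypothesis is preserved (or that Lemma \ref{cutedge} can be invoked) when the removal of $S$ disconnects $G$, and that the small-$n$ Claim covers the resulting components; since those two lemmas are already available, this should be routine but must be stated. If the straightforward "delete $S$" counting is too lossy in one of the $6$-$k$-$6$ subcases, the fallback is a local discharging argument: assign each vertex charge $d(v)-\frac{38}{7}$ (so the total is $2e(G)-\frac{38}{7}n$, which is positive exactly when $e(G) > \frac{19}{7}n - \frac{18}{7}$ up to the additive constant) and redistribute charge from the forced-dense region around the $6$-$6$ edge or $6$-$k$-$6$ path to derive a contradiction with planarity.
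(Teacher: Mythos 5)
Your high-level strategy --- isolate a bounded set $S$ around the degree-$6$ vertices, apply the induction hypothesis to $G\backslash S$, and bound $e(G[S])+e[S,V(G)\backslash S]$ --- is indeed the paper's strategy. But your key structural deduction for the $6$-$6$ edge is wrong, and it erases exactly the hard subcase. With backbone $uv$, the vertex $v$ supplies its five neighbours other than $u$ as the five leaves, and an $S_{2,5}$ arises only if $u$ has \emph{two} further neighbours outside $N[v]$; so $S_{2,5}$-freeness gives $|N(u)\setminus N[v]|\le 1$ (i.e.\ at least $4$ triangles on $uv$), and says nothing about the degrees of such a vertex. Your claim that $N(u)\setminus N[v]$ consists of degree-$\le 2$ vertices, hence is empty by $\delta(G)\ge 3$, is false: the configuration with four common neighbours plus a private neighbour $e$ of $u$ and a private neighbour $f$ of $v$ (both of degree $\ge 3$) is perfectly consistent with $S_{2,5}$-freeness, and it is precisely the case (Case 1.2 of the paper) that consumes most of the work, requiring a case analysis on $e[S_1,S_2]$ to show the added edges number at most $10$.

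The second problem is your assertion that the numeric inequality "should hold with room to spare." It does not: the bounds are exactly tight. For the five-triangle configuration, $|S|=7$ and the induction allows a budget of $\frac{19}{7}\cdot 7=19$ edges for $e(G[S])+e[S,S']$; one has $e(G[S])\le 15$ and each of the five common neighbours can send one edge out, which gives $15+5=20$ and \emph{exceeds} the budget. The paper closes this gap with a parity/cut-edge argument showing that not all five outgoing edges can be present, landing on $19$ exactly. A generic count of the form "$3|S|-6$ plus a couple of boundary edges" therefore cannot succeed; every subcase needs the sharp, configuration-specific bookkeeping you deferred. (For completeness: the paper disposes of the $6$-$5$-$6$ path by showing it forces a $6$-$6$ edge or an immediate $S_{2,5}$, and the $6$-$4$-$6$ and $6$-$3$-$6$ cases additionally use that a $3$-$3$ edge can be deleted by induction --- none of which appears in your plan, though these parts are closer in spirit to what you describe.)
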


Therefore, we assume that $G$ does not contain any subgraph mentioned in above lemma. Let $S=\{v\in V(G)| d(v)=6\}$ and $H=G[S]$.  Recall that $n_{k}$ is the number of vertices with degree $k$ and $n_{6}=|V(H)|$. Since there does not exist $6$-$6$ edge, $S$ is an independent set in $G$. Moreover, any two vertices in $H$ have no common neighbors. Otherwise, $G$ contains a $6$-$5$-$6$ path, $6$-$4$-$6$ path or $6$-$3$-$6$ path.

\begin{claim}\label{degree4}
Let $u$ be a vertex of degree $6$. There exits a vertex $v'\in N(u)$ such that $d(v')\leq 4$.
\end{claim}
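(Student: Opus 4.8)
The plan is to argue by contradiction. Suppose every vertex in $N_G(u)$ has degree at least $5$. Since $d(u)=6$ and $G$ contains no $6$-$6$ edge, no neighbour of $u$ has degree $6$, so each of the six vertices $v_1,\dots,v_6$ of $N_G(u)$ has degree exactly $5$; in particular each $v_i$ has exactly four neighbours besides $u$.

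The first step is to force many edges inside $N_G(u)$. If some $v_i$ had at most two neighbours among $\{v_1,\dots,v_6\}$, then it would have at least two neighbours in $V(G)\setminus N_G[u]$; these two vertices, together with the five vertices of $\{v_1,\dots,v_6\}\setminus\{v_i\}$ and with $u$, form nine distinct vertices carrying a copy of $S_{2,5}$ with backbone $uv_i$ (the five leaves at $u$ being $\{v_1,\dots,v_6\}\setminus\{v_i\}$ and the two leaves at $v_i$ being its neighbours outside $N_G[u]$), contradicting that $G$ is $S_{2,5}$-free. Hence every $v_i$ has at least three neighbours inside $N_G(u)$, so $e(G[N_G(u)])\geq \lceil 18/2\rceil=9$.

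Next I would invoke planarity. The induced subgraph $G[N_G[u]]$ has $7$ vertices and at least $6+9=15$ edges, while a planar graph on $7$ vertices has at most $3\cdot 7-6=15$ edges. Therefore $e(G[N_G[u]])=15$, so $G[N_G[u]]$ is a maximal planar graph on $7$ vertices, hence $3$-connected. In its planar embedding the neighbourhood of $u$ induces a cycle $v_1v_2\cdots v_6$ (the faces at $u$ being the triangles $uv_iv_{i+1}$), and the three edges of $G[N_G(u)]$ not on this cycle are non-crossing diagonals triangulating the hexagon $v_1\cdots v_6$ on the side away from $u$. Now three diagonals cannot cover all six vertices of the hexagon: if one of them is a ``short'' diagonal $v_jv_{j+2}$ it isolates $v_{j+1}$, which then lies on no diagonal, and if all three are ``long'' diagonals $v_1v_4,v_2v_5,v_3v_6$ they pairwise cross, contradicting planarity. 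So some $v_i$ lies on no diagonal, whence $d_{G[N_G(u)]}(v_i)=2$ and therefore $d_G(v_i)=3$, contradicting $d(v_i)=5$. This contradiction yields the claim.

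The step I expect to be the main obstacle is the last one: arguing cleanly that a $7$-vertex triangulation with a degree-$6$ vertex must look like ``one central vertex, a hexagonal link, and three chords'', and that three chords triangulating a hexagon cannot form a perfect matching on its vertices. Everything preceding it is routine degree counting together with the standard planar edge bound, but the final combinatorial argument is where care is needed; one could alternatively appeal to the classical fact that every triangulation of a convex polygon with at least four vertices has at least two ears.
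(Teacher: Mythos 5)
Your proof is correct, and it takes a genuinely different route from the paper's. The paper fixes a single $6$-$5$ edge $uv$, shows that $3$ or $4$ triangles must sit on it, and then runs a figure-based case analysis on the local embedding to locate a neighbour $v'$ of $u$ with two neighbours outside $N[u]$, which yields an $S_{2,5}$ with backbone $uv'$. You instead argue globally on $G[N_G[u]]$: the same basic $S_{2,5}$-finding move (a degree-$5$ neighbour of $u$ with two neighbours outside $N_G[u]$ gives a forbidden double star) forces every neighbour to have at least three neighbours inside $N_G(u)$, which pins $e(G[N_G[u]])$ at exactly $15=3\cdot 7-6$; the graph is then a $7$-vertex triangulation whose hexagonal link around $u$ carries only three pairwise non-crossing chords, and the two-ears fact for polygon triangulations produces a link vertex with only two neighbours in $N_G(u)$, a contradiction. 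This buys you a cleaner, embedding-light argument that replaces the paper's several pictures with one counting step plus a standard combinatorial lemma, at the cost of invoking the facts that a $7$-vertex maximal planar graph is a triangulation with a cyclic link at $u$. One small slip at the end: from $d_{G[N_G(u)]}(v_i)=2$ you cannot conclude $d_G(v_i)=3$, since $v_i$ may have neighbours outside $N_G[u]$; but no repair is needed beyond rephrasing, because a vertex with only two neighbours inside $N_G(u)$ directly contradicts the conclusion of your own first step (or, equivalently, it has two neighbours outside $N_G[u]$ and again yields an $S_{2,5}$).
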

\begin{proof}
Suppose that all vertices in $N(u)$ have degree $5$. Let $uv$ be a $6$-$5$ edge. Since $G$ is $S_{2,5}$-free, the edge $uv$ must be contained in $3$ or $4$ triangles.

\begin{figure}[ht]
  \centering
  \includegraphics[width=0.98\textwidth]{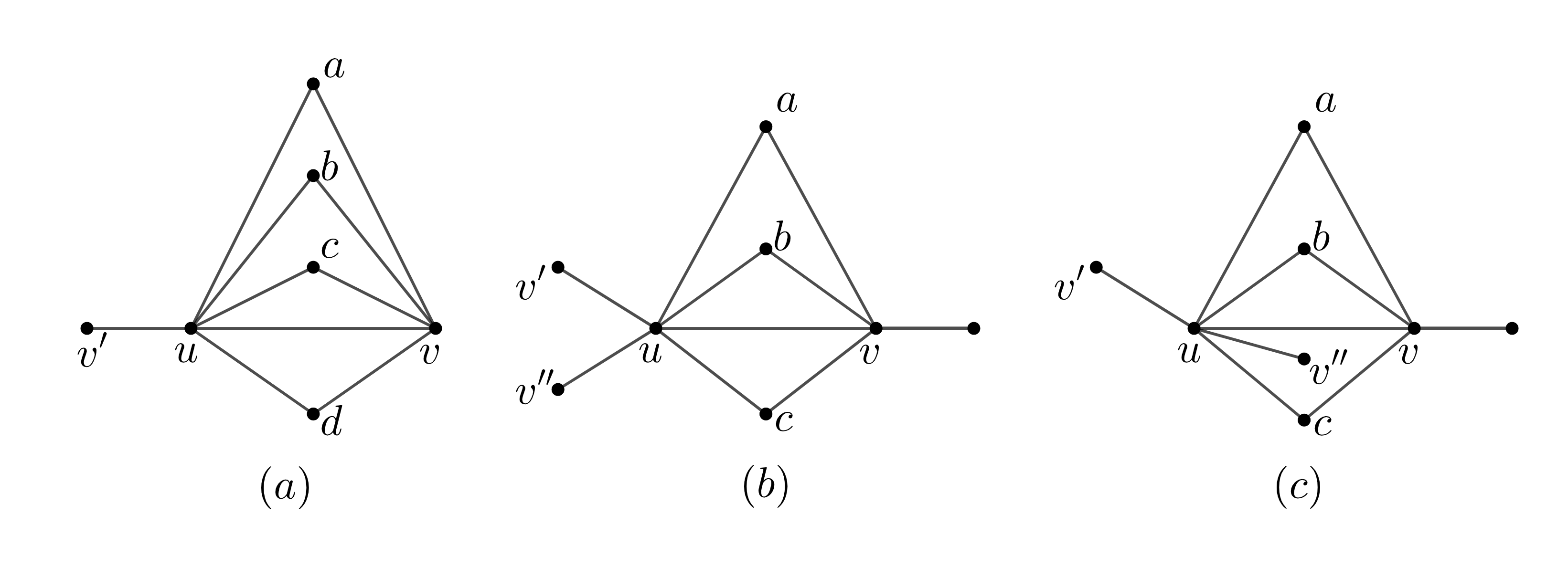}
  \caption{The graphs with a $6$-$5$ edge.}
  \label{fig_65}
\end{figure}

If there are $4$ triangles sitting on $uv$,  there  exists another vertex $v'$ adjacent to $u$  which is not adjacent to $v$, see Figure \ref{fig_65}$(a)$. In this plane graph, the vertex $v'$ can  be adjacent to at most two vertices in $N[u]$, such as $a$ and $d$. Since $d(v')=5$, the vertex $v'$ has two neighbors in $G\backslash N[u]$. We obtain that $G$ contains an $S_{2,5}$ and $uv'$ is the backbone, a contradiction.

If there are $3$ triangles sitting on $uv$,  there  exist two other vertices $v',v''$ adjacent to $u$  which are not adjacent to $v$, see Figure \ref{fig_65}$(b,c)$. If $v', v''$ are in the different faces of the plane graph, then $v'$ can  be adjacent to $a$ and $c$. Similarly, $v'$ has at least two neighbors in $G\backslash N[u]$ for $d(v')=5$, a contradiction.
Now we assume that $v', v''$ are in the same face of the plane graph.
If one vertex of $a$ or $c$ is not adjacent to $v'$, then  there are also at least two neighbors of $v'$ in $G\backslash N[u]$, a contradiction. Thus assume that  $a$ and $c$  are neighbors of $v'$. Then, the other vertex $v''$ can have at most one neighbor in $\{a, c\}$ and have at least two neighbors in $G\backslash N[u]$. This implies $G$ contains an $S_{2,5}$ and  $uv''$ is the backbone, a contradiction.
\end{proof}

Recall that any two vertices of degree $6$ have no common neighbor vertices and every vertex of degree $6$ has a neighbor of degree less than or equal to $4$.
We have $n_{6}\leq n_{3}+n_{4}$.

Since $n=\sum\limits_{i=3}^{6}n_{i}$,

\begin{align*}
m&=\frac{1}{2}\sum\limits_{v}d(v)=\frac{1}{2}\sum\limits_{i=3}^{6}i\cdot n_{i}\\
&=\frac{1}{2}(6\cdot n_{6}+4\cdot n_{4}+3\cdot n_{3}+5\cdot (n-n_{3}-n_{4}-n_{6}))\\
&=\frac{1}{2}(5\cdot n + n_{6}-n_{4}-2n_{3})\\
&\leq \frac{5}{2}n
\end{align*}

From the previous discussion, we obtain that $e(G)\leq \frac{5}{2}n \leq \frac{19}{7}n-\frac{18}{7}$ when $n\geq 12$. This completes the proof of Theorem \ref{th1}.

\section{Proof of Lemma \ref{lem}}

In this section, we  will prove the Lemma \ref{lem} used in the above. We also proceed the proof by induction on $n$. Thus, we suppose $\ex_{\mathcal{P}}(k,S_{2,5})\leq \frac{19}{7}k-\frac{18}{7}$ for $1\leq k \leq n-1$.
For convenience, we rewrite the lemma here.

\noindent{\bf Lemma \ref{lem}}
Let $G$ be an   $n$-vertex  $S_{2,5}$-free connected graph without a cut edge. $\Delta(G)=6$ and $\delta(G)\geq 3$. If $G$ contains any copy of $6$-$6$ edge, $6$-$5$-$6$ path, $6$-$4$-$6$ path or $6$-$3$-$6$ path, then $e(G)\leq \frac{19}{7}n-\frac{18}{7}$.

\begin{proof}
Based on the theorem above, we can assume that $n\geq 12$. Let us consider each case in turn.

\noindent{\bf Case 1.}\quad $G$ contains a $6$-$6$ edge.

Let $uv$ be the $6$-$6$ edge. Since $G$ is $S_{2,5}$-free, there are at least $4$ triangles sitting on the edge $uv$. We distinguish the cases based on the number of triangles sitting on $uv$.

\noindent{\bf Case 1.1.}\quad There are $5$ triangles sitting on $uv$.

\begin{figure}[ht]
  \centering

  \includegraphics[width=0.95\textwidth]{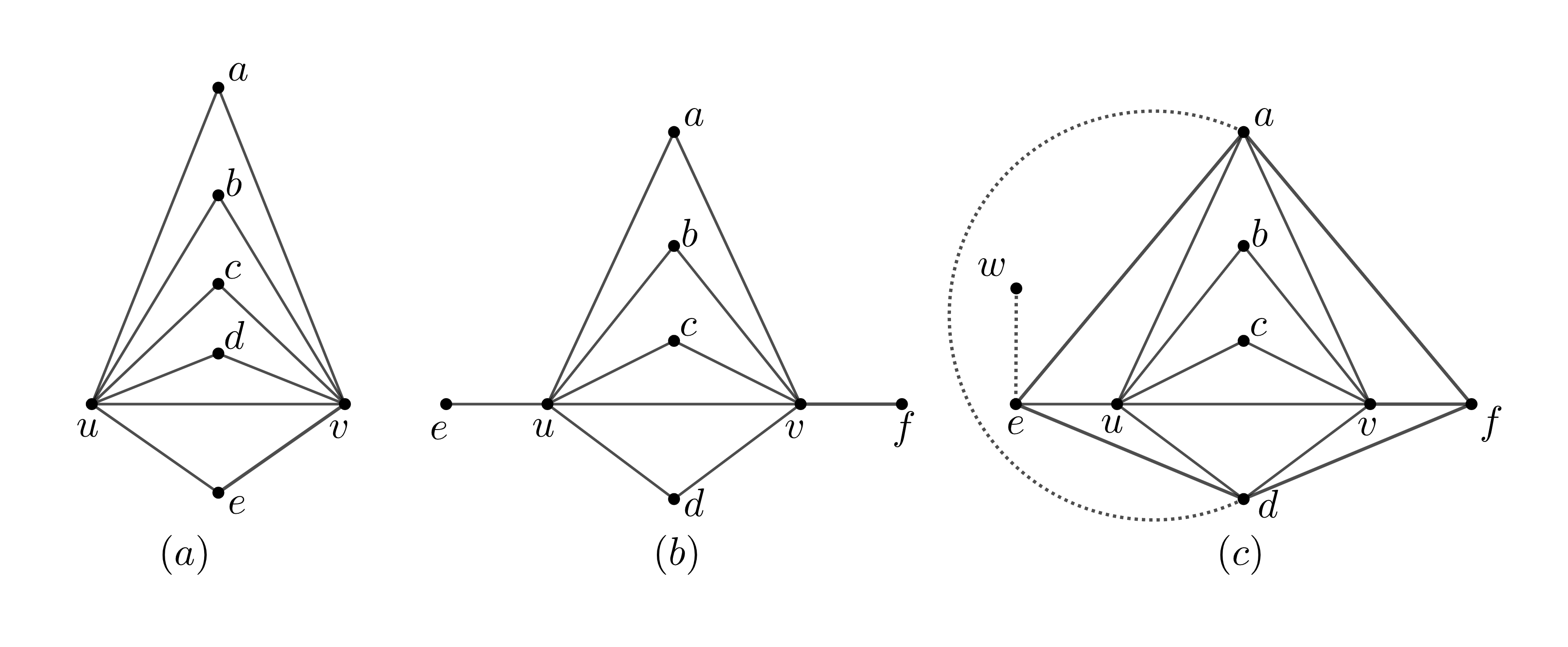}
  \caption{$(a), (b)$: The graphs with a $6$-$6$ edge $uv$ which $5$ or $4$ triangles sit on; $(c)$: $e, f$ have common neighbors in $S_{1}$}
  \label{fig_661}
\end{figure}

Let $a$, $b$, $c$, $d$ and $e$ be the vertices adjacent to both $u$ and $v$, see Figure \ref{fig_661}$(a)$. Let $S=\{u, v, a, b, c, d, e\}$, $S_{1}=\{a, b, c, d, e\}$ and $H=G[S]$. All vertices in $S_{1}$ can form a path of length at most $4$. We have $e(H)\leq 11+4=15$.

Note that all edges between $S$ and $V(G)\backslash S$ are those incident with vertices in $S_{1}$ and each vertex in $S_{1}$ has at most one neighbor in $V(G)\backslash S$.
Assume that all these $5$ edges exist. Since the cardinal number of $S_{1}$ is odd, there must exist a cut edge incident with some vertex in $S_{1}$, a contradiction.
Therefore, at most $4$ edges can be added between $H$ and $G\backslash H$.

By the induction hypothesis, $e(G)\leq e(G\backslash H)+ e(H)+ 4 = \frac{19}{7}(n-7)-\frac{18}{7}+19= \frac{19}{7}n-\frac{18}{7}$.

\noindent{\bf Case 1.2.}\quad There are $4$ triangles sitting on $uv$.

Let $a$, $b$, $c$ and $d$ be the vertices adjacent to both $u$ and $v$. Let $e$ be the vertex only adjacent to $u$ and $f$ be the vertex only adjacent to $v$, see Figure \ref{fig_661}$(b)$. Let $S=\{u, v, a, b, c, d, e, f\}$, $S_{1}=\{a, b, c, d\}$, $S_{2}=\{e, f\}$ and $S_{3}=\{u, v\}$. That means $S = S_{1}\cup S_{2}\cup S_{3}$. And we set $S'=V(G)\backslash S$, $H=G[S]$, $H'=G[S']$ and $H_{i}=G[S_{i}]$ for $i\in \{1, 2, 3\}$.

Then $e(G)= e(H)+e[S,S']+e(H')$,
where $e(H)= e(H_{1})+e(H_{2})+e(H_{3})+e[S_{1}, S_{2}]+e[S_{1}, S_{3}]+e[S_{2}, S_{3}]$ and $e[S,S']= e[S_{1}, S']+e[S_{2}, S']+e[S_{3}, S']$.
It is easy to know that $e(H_{3})+e[S_{1}, S_{3}]+e[S_{2}, S_{3}]=11$ and $e[S_{3}, S']=0$.
Thus $e(G)= e(H)+e[S,S']+e(H')=11+e(H')+e_{add}$, where $e_{add}=e(H_{1})+e[S_{1}, S']+e(H_{2})+e[S_{2}, S']+e[S_{1}, S_{2}]$.

Now we discuss the upper bound for $e_{add}$. Recall that $G$ is $S_{2,5}$-free. The vertices in $S_{1}$ may form a path of length at most $3$ and each vertex can have at most one neighbor in $H'$. Thus we have $e(H_{1})\leq 3$ and $e[S_{1}, S']\leq 4$. If  $e$ is adjacent to $f$, then $e(H_{2})=1$. And each vertex in $\{e, f\}$ can have at most one neighbor in $H'$, which means $e[S_{2}, S']\leq 2$. As $G$ is a plane graph, each vertex in $\{e, f\}$ (i.e. $S_{2}$) can have at most two neighbors in  $\{a, b, c, d\}$ (i.e. $S_{1}$). This means $e[S_{1}, S_{2}]\leq 4$. From the above results, it can be concluded that $e_{add}\leq 3+4+1+2+4=14$. Obviously, the discussion is a little rough. We need to improve the bound.

It is obtained that $e(H_{2})+e[S_{2}, S'] \leq 2$. Recall that $e(H_{2})\leq 1$ and $e[S_{2}, S'] \leq 2$.
In fact, if $e(H_{2})=1$, $H$ contains the edge $ef$. These two vertices each can not have a neighbor in $H'$, otherwise there exists an $S_{2,5}$. This implies $e[S_{2}, S']=0$.

Now, we show that  $e[S_{1}, S_{2}]+e[S_{1}, S']\leq 5$ in most cases.

\vspace{1mm}(i) If $e[S_{1}, S_{2}]=0$, then $e[S_{1}, S_{2}]+e[S_{1}, S']=e[S_{1}, S']\leq 4$.

\vspace{1mm}(ii) If $e[S_{1}, S_{2}]=1$, assume that $e$ has a neighbor $a$ in $S_{1}$ without loss of generality. Then the vertex $a$ can not have a neighbor in $H'$, which means $e[S_{1}, S']\leq 3$. So $e[S_{1}, S_{2}]+e[S_{1}, S']\leq 4$.

\vspace{1mm}(iii) If $e[S_{1}, S_{2}]=2$, there are two subcases. Assume that $e$ has two neighbors $a$, $d$, then $a, d$ can not have a neighbor in $H'$, meaning
$e[S_{1}, H']\leq 2$. Thus $e[S_{1}, S_{2}]+e[S_{1}, S']\leq 4$.
Now assume that  two vertices (i.e. $e$, $f$) each has a neighbor in $S_{1}$. If their neighbors are different, there exist two vertices in $S_{1}$
without a neighbor in $H'$. This implies $e[S_{1}, S']\leq 2$. Then $e[S_{1}, S_{2}]+e[S_{1}, S']\leq 4$. If  $e$, $f$ are adjacent to the same vertex $a$,
then only $a$ can not have a neighbor in $H'$. This means $e[S_{1}, S']\leq 3$ and $e[S_{1}, S_{2}]+e[S_{1}, S']\leq 5$.

\vspace{1mm}(iv) If $e[S_{1}, S_{2}]=3$, assume that $e$ has two neighbors $a$, $d$ and $f$ has one neighbor, denoted by $x$, in $S_{1}$. If the vertex $x \notin \{a, d\}$, these three vertices $a, d, x$ can not have a neighbor in $H'$. Thus $e[S_{1}, S_{2}]+e[S_{1}, S']\leq 3 + 1 =4$. Otherwise if $x\in \{a, d\}$, these two vertices $a, d$ can not have a neighbor in $H'$. We have $e[S_{1}, S_{2}]+e[S_{1}, S']\leq 3 + 2 =5$.

\vspace{1mm}(v) If $e[S_{1}, S_{2}]=4$, assume that $e$ has two neighbors $a$, $d$ and $f$ has two neighbors $x, y$.

1). If $|\{a, d\}\cup \{x,y\}|\geq 3$, there are at least three vertices which can not have a neighbor in $H'$. This means $e[S_{1}, S']\leq 1$ and $e[S_{1}, S_{2}]+e[S_{1}, S']\leq 5$.

2). If $ \{a, d\} = \{x,y\}$, there are two vertices which can not have a neighbor in $H'$. This implies $e[S_{1}, S']\leq 2$ and $e[S_{1}, S_{2}]+e[S_{1}, S']\leq 6$.

Next we will prove that $e_{add}\leq 10$.

It is easy to know that $e_{add}\leq 10$ when $e[S_{1}, S_{2}]+e[S_{1}, S']\leq 5$. So the only case we need to analysis is the second subcase of (v). That is  $e, f$ have common neighbors in $S_{1}$, see Figure \ref{fig_661}$(c)$.

If $e(H_{1})=3$, $a, b, c, d$ form a path of length $3$. Since $ad$ is an edge, $e$ is an interior vertex in the triangle $adu$. However, we claim that $e$ and $f$ can not have a neighbor in $H'$. Recall that the vertices $u, a, d$  do not have a neighbor in $H'$.
In fact, if there is the edge $ew$ with $w\in H'$,  it is easy to check that $ew$ is a cut edge in $G$. Similarly we can prove that for $f$. So  $e[S_{2}, S']=0$ and $e_{add}= e(H_{1})+(e[S_{1}, S_{2}]+e[S_{1}, S'])+(e(H_{2})+e[S_{2}, S'])\leq 3 + 6 + 1 \leq 10$.

If  $e(H_{1})\leq 2$, then $e_{add}\leq 2 + 6 + 2 =10$.

From the above discussion, we have $e_{add}\leq 10$.

Thus $e(G)= e(H)+e[S,S']+e(H')=11+e(H')+e_{add}\leq 21 + \frac{19}{7}(n-8)-\frac{18}{7}\leq \frac{19}{7}n-\frac{18}{7}$.

With loss of generality, we suppose that $G$ does not contain a $6$-$6$ edge. This implies that vertices of degree $6$ in $G$ are not adjacent to each other.

\noindent{\bf Case 2.}\quad $G$ contains a $6$-$5$-$6$ path.

Let $u, v, w$ be the vertices in the $6$-$5$-$6$ path and $d(v)=5$. We know that $uv$ and $wv$ are  $6$-$5$ edges. There are  $3$ or $4$ triangles sitting on the $6$-$5$ edge, otherwise $G$ contains an $S_{2,5}$. As shown in Figure \ref{fig_65}, these two graphs have the $6$-$5$ edge $uv$.

If there are $4$ triangles sitting on $uv$, see Figure \ref{fig_65}$(a)$, it is easy to see that the vertex $w$ will be a neighbor of $u$, a contradiction.

Thus there are $3$ triangles sitting on $uv$, see Figure \ref{fig_65}$(b)$. But it is confirmed that $w$ and $v$ have at most two common neighbor vertices, which implies there are at most two triangles sitting on the $6$-$5$ edge $wv$. Then we find an $S_{2,5}$, a contradiction.

Therefore, if $G$ contains a $6$-$5$-$6$ path, there must exist a $6$-$6$ edge. We have $e(G)\leq \frac{19}{7}n-\frac{18}{7}$.

\begin{figure}[ht]
  \centering
  \includegraphics[width=0.95\textwidth]{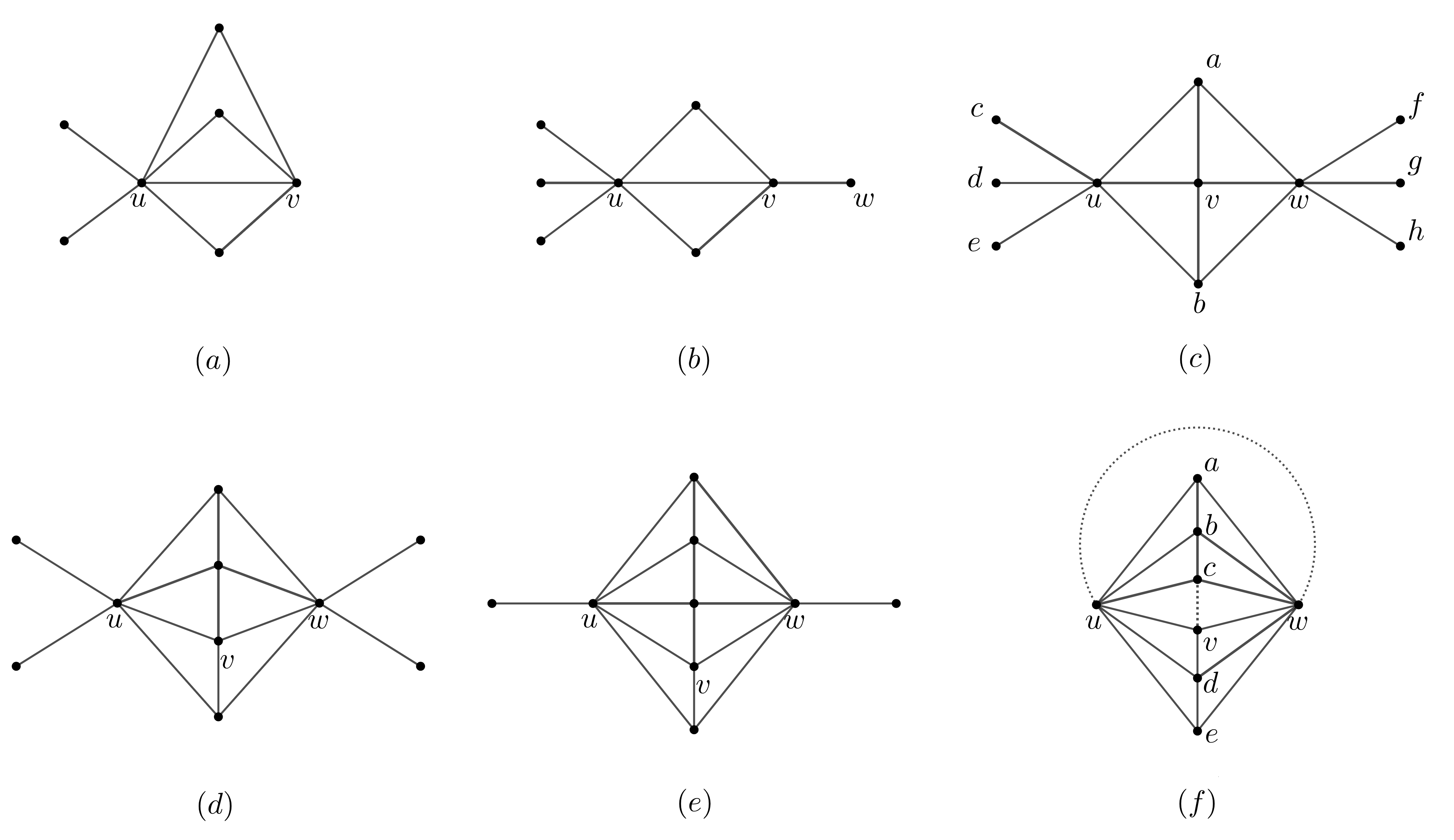}
  \caption{The graphs with a $6$-$4$ edge and a $6$-$4$-$6$ path.}
  \label{fig_64}
\end{figure}

\noindent{\bf Case 3.}\quad $G$ contains a $6$-$4$-$6$ path.

Let $u, v, w$ be the vertices in the $6$-$4$-$6$ path and $d(v)=4$, where $uv$ and $wv$ are $6$-$4$ edges. There  are $2$ or $3$ triangles sitting on the $6$-$4$ edge, otherwise an $S_{2,5}$ is contained.

If there are $3$ triangles sitting on  $uv$, see Figure \ref{fig_64}$(a)$, the vertex $w$ must be adjacent to $u$. We find a $6$-$6$ edge $uw$, a contradiction.

Thus there are $2$ triangles sitting on the $6$-$4$ edge, see Figure \ref{fig_64}$(b)$.
We claim that $G$ does not contain a $3$-$3$ edge. Indeed, if there is a $3$-$3$ edge, then we have $e(G)\leq \frac{19}{7}(n-2)-\frac{18}{7}+5\leq \frac{19}{7}n-\frac{18}{7}$ by induction hypothesis.

Then the vertex $w$  can be determined  uniquely and  four $6$-$4$-$6$ paths without $6$-$6$ edge or $3$-$3$ edge are constructed, shown in Figure \ref{fig_64}.

Next we discuss the subcase $(c)$ firstly.
Let $S=\{u, v, w, a, b, c, d, e, f, g, h\}$, $S_{1}=\{a, b\}$, $S_{2}=\{c, d, e, f, g, h\}$ and $S_{3}=\{u, v, w\}$. Let $S'=V(G)\backslash S$, $H=G[S]$, $H'=G[S']$ and $H_{i}=G[S_{i}]$ for $i\in \{1, 2, 3\}$.

Note that $u, v, w$ has no other neighbors in $G$. Similarly, $e(G)= e(H)+e[S,S']+e(H')=14+e(H')+e_{add}$, where $e_{add}=e(H_{1})+e[S_{1}, S']+e[S_{1}, S_{2}]+e(H_{2})+e[S_{2}, S']$.

Since $S_{1}=\{a, b\}$, we have $e(H_{1})\leq 1$. And each vertex in $\{a, b\}$ has no other neighbors in $G\backslash  S_{3}$, otherwise, there exists an $S_{2,5}$. This implies $e[S_{1}, S']=0$ and $e[S_{1}, S_{2}]=0$. 
Note that each vertex in   $\{c, d, e\}$ can have at most one vertex in  $ \{f, g, h\}\cup S'$, otherwise an $S_{2,5}$ is found. Similarly,  each vertex in $\{f, g, h\}$ can have at most one vertex in  $\{c, d, e\}\cup S'$. Furthermore, $\{c, d, e\}$  can form a triangle. It is the same to $\{f, g, h\}$.  So $e(H_{2})+e[S_{2}, S']\leq 12$.

From the above, $e_{add}\leq 1+12=13$. Thus by induction hypothesis $e(G)= 14+e(H')+e_{add}\leq 14 + \frac{19}{7}(n-11)-\frac{18}{7} + 13\leq \frac{19}{7}n-\frac{18}{7}$.

The proofs are similar to the subcases $(d), (e)$ respectively. We  skip it here for conciseness.

We consider the subcase $(f)$. Let $S=\{u, v, w, a, b, c, d, e\}$. Note that any vertex in $S$ can not have a neighbor in $G\backslash S$, otherwise an $S_{2,5}$ is found. Since $G$ is connected and $|S|=8$, we have $|V(G)|=8$. Indeed, if $|G|\geq 9$, it is easy to see that $G$ is disconnected. Thus $e(G)\leq 18\leq \frac{19}{7}\cdot 8-\frac{18}{7}$.

\noindent{\bf Case 4.}\quad $G$ contains a $6$-$3$-$6$ path.

Let $u, v, w$ be the vertices in the $6$-$3$-$6$ path and $d(v)=3$, where $uv$ and $wv$ are $6$-$3$ edges. There  are $1$ or $2$ triangles sitting on the $6$-$3$ edge, based on $G$ is $S_{2,5}$-free.

\begin{figure}[ht]
  \centering
  \includegraphics[width=0.9\textwidth]{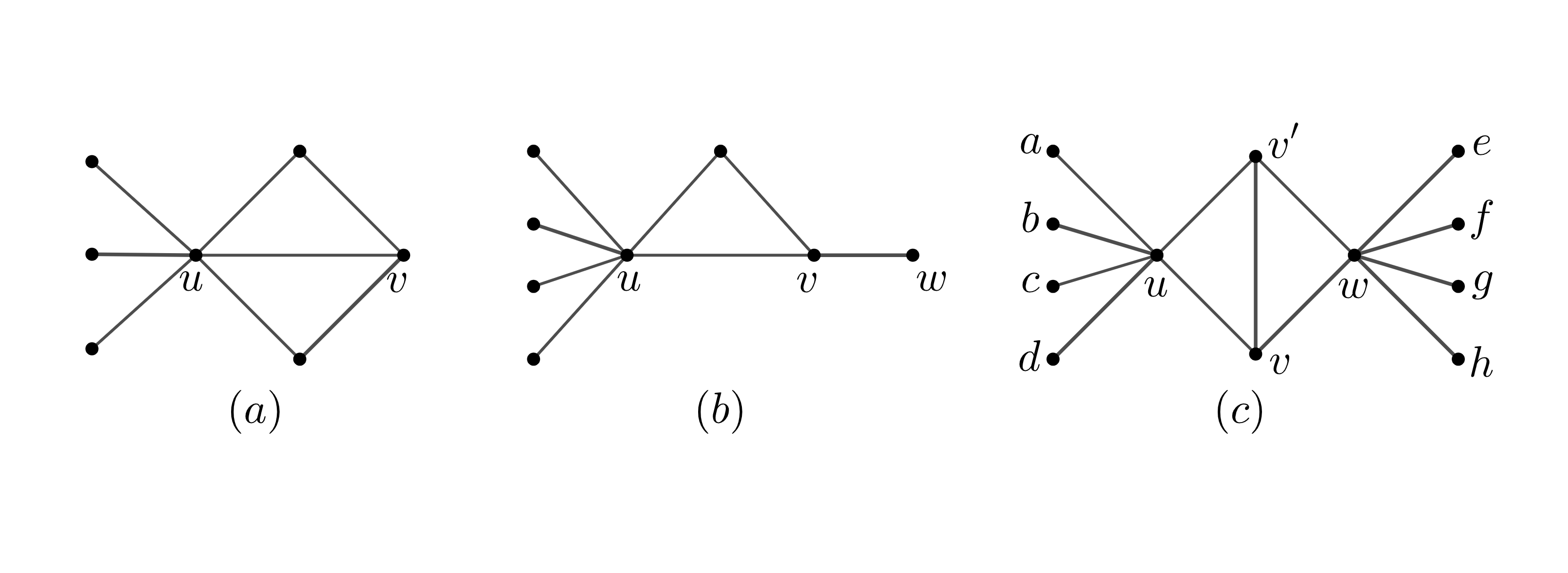}
  \caption{The graphs with a $6$-$3$ edge and a $6$-$3$-$6$ path.}
  \label{fig_63}
\end{figure}

If there are $2$ triangles sitting on $uv$, see Figure \ref{fig_63}$(a)$,  $uw$ is an edge, which contradicts with the fact that the vertices of degree $6$ in $G$ are not adjacent to each other.

Thus there is only one triangle sitting on $uv$, see Figure \ref{fig_63}$(b)$, we add $w$ to the $6$-$3$ edge $uv$ in order to form a $6$-$3$-$6$ path without  $6$-$6$ edge, $6$-$5$-$6$ path or $6$-$4$-$6$ path.

Suppose that $\{a, b, c, d\} \cap \{e, f, g, h\}= \emptyset$, shown in Figure \ref{fig_63}$(c)$. It is noticed that the vertex $v'$ can not have a neighbor in $G\backslash \{u,v,w\}$, otherwise $G$ has an $S_{2,5}$. This means $d(v')=3$. Then $vv'$ is the $3$-$3$ edge. Recall that $G$ has no $3$-$3$ edge, a contradiction.

On the other hand, if $\{a, b, c, d\} \cap \{e, f, g, h\}\neq \emptyset$, it is easy to find  a $3$-$3$ edge or $6$-$4$-$6$ path in $G$.

So we have $e(G)\leq \frac{19}{7}n-\frac{18}{7}$.

This completes the proof of the lemma.
\end{proof}

\section{Acknowledgement}
This work was supported by Research Funds (No. 2023YZZKY19) and Cultivation Plan for ``Yujie" Team (No. 107051360022XN725) of North China University of Technology.

\end{document}